\documentclass[11pt]{article}
\usepackage[utf8]{inputenc}
\usepackage{fullpage,amsmath,amssymb,amsthm}
\newcommand{\E}{\mathbb{E}}
\renewcommand{\P}{\mathbb{P}}
\newcommand{\R}{\mathbb{R}}
\newcommand{\EE}{\mathcal{E}}
\newcommand{\LL}{\mathcal{L}}
\newcommand{\var}{\mathrm{Var}}
\newcommand{\C}{\mathbb{C}}
\renewcommand{\H}{\mathbb{H}}
\newcommand{\cmat}{\C^{d\times d}}

\newtheorem{theorem}{Theorem}[section]

\newtheorem{proposition}[theorem]{Proposition}
\theoremstyle{definition}

\begin{document}

\title{Scalar Poincar\'e Implies Matrix Poincar\'e}

\author{
Ankit Garg \\Microsoft Research India \\ \text{garga@microsoft.com} \and Tarun Kathuria \\ EECS, UC Berkeley \\
\text{tarunkathuria@berkeley.edu} \and Nikhil Srivastava\thanks{Supported by NSF Grant CCF-1553751.}\\ Mathematics, UC Berkeley \\ \text{nikhil@math.berkeley.edu}
}

\maketitle

\begin{abstract} 
We prove that every reversible Markov semigroup which satisfies a Poincar\'e inequality satisfies a matrix-valued Poincar\'e inequality for Hermitian $d\times d$ matrix valued functions, with the same Poincar\'e constant. This generalizes recent results \cite{aoun2019matrix,tarun} establishing such inequalities for specific semigroups and consequently yields new matrix concentration inequalities. The short proof follows from the spectral theory of Markov semigroup generators.
\end{abstract}

\section{Introduction}
There is a long tradition in probability theory (see e.g. \cite{gromov1983topological,led}) of using functional inequalities on a probability space $(\Omega,\Sigma,\P)$ to derive concentration inequalities for nice (e.g. Lipschitz) functions $f:\Omega\rightarrow\R$ on that space. The most basic of these inequalities is the Poincar\'e inequality, which postulates that:
\begin{equation}\label{eqn:poincare}\alpha \EE(f,f)\ge \var(f),\end{equation}
for an appropriately large class of $f$, where $\EE(\cdot,\cdot)$ is an appropriate Dirichlet form and $\alpha>0$ is the Poincar\'e constant.

Recently there has been growing interest in extending this phenomenon to { matrix-valued functions} \cite{ch16,CHT15,CH15, aoun2019matrix,tarun}. The last two of these works in particular (independently) studied the notion of {\em matrix Poincar\'e inequality}, in which \eqref{eqn:poincare} is required to hold for $\H_d$-valued $f$, $\EE$, and $\var$, with the inequality replaced by the Lo\"ewner ordering on $\H_d$, the space of $d\times d$ Hermitian matrices. They showed that a matrix Poincar\'e inequality generically implies concentration bounds in the operator norm similar to those in the scalar case\footnote{\cite{aoun2019matrix} also used the related notion of matrix carr\'e du champ operator and obtained  more refined bounds than \cite{tarun} in terms of it.}. They then proceeded to prove matrix Poincar\'e inequalities for several interesting classes of measures (product \cite{aoun2019matrix}, Gaussian \cite{aoun2019matrix}, Strongly Rayleigh \cite{aoun2019matrix, tarun}) on a case by case basis, often mimicking the scalar proofs but requiring significant additional work to handle the noncommutativity of matrices.

In this note, we show that the second step above can also be made generic, and that matrix Poincar\'e inequalities follow automatically from their scalar counterparts in the full generality of arbitrary reversible Markov semigroups (see the excellent book \cite{bakry2013analysis} for a detailed introduction). 

Let $L^2(\Omega,\mu)$ be a separable complex Hilbert space, and let $\C^{d\times d}$ be the Hilbert space of complex $d\times d$ matrices with the Hilbert-Schmidt inner product. To state our theorem, we define a {``matrix-valued inner product''} $\langle\cdot,\cdot\rangle_d$ on the Hilbert space tensor product $L^2(\Omega,\mu)\otimes \cmat \cong L^2(\Omega,\mu;\C^{d\times d})$ as:
\begin{equation} \langle f,g\rangle_d := \int_\Omega f(x)^* g(x)d\mu(x)\in \C^{d\times d},\end{equation}
noting that the integral converges for all $f,g\in L^2(\Omega,\mu)\otimes \cmat$ since
$$\int_\Omega \|f(x)^*g(x)\|_{op} \: d\mu(x)\le \int_\Omega \|f(x)\|_{HS}\|g(x)\|_{HS} \: d\mu(x)\le \|f\|_{L^2(\mu)\otimes\cmat}\|g\|_{L^2(\mu)\otimes\cmat}.$$

\begin{theorem}\label{thm:main} Let $(X_t)_{t\ge 0}$ be a reversible Markov process on a probability space $(\Omega,\Sigma,\P)$ with stationary measure $\mu$ and densely defined self-adjoint infinitesimal generator $\LL:D(\LL)\rightarrow L^2(\Omega,\mu)$. Suppose $\LL$ satisfies a Poincar\'e inequality with constant $\alpha>0$, i.e.,
$$ \alpha \EE (f,f):= \alpha \langle f, -\LL f\rangle \ge \langle f,f\rangle$$
for all $f\in D(\LL)$ with $\E_\mu f=0$. Then
\begin{equation}\label{eqn:dpoinc} \alpha \EE_d(f,f):= \alpha \langle f,(-\LL\otimes I_{\cmat}) f\rangle_d \succeq \langle f,f\rangle_d\end{equation}
for all $f\in D(\LL)\otimes \cmat$ with $\E_\mu f=0$.
\end{theorem}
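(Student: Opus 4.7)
The plan is to reduce the matrix-valued inequality to the scalar Poincar\'e inequality by testing against arbitrary vectors $v\in\C^d$. Since both sides of~\eqref{eqn:dpoinc} are Hermitian $d\times d$ matrices, the Lo\"ewner ordering $\alpha\EE_d(f,f)\succeq \langle f,f\rangle_d$ is equivalent to the family of scalar inequalities $v^*\bigl(\alpha\EE_d(f,f)-\langle f,f\rangle_d\bigr)v\ge 0$, one for each $v\in\C^d$. I would therefore fix an arbitrary $v$ and try to derive this reduced statement.

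The key computation is to identify $v^*\langle f,g\rangle_d v$ in terms of the ambient Hilbert space structure on $L^2(\Omega,\mu)\otimes \C^d$. Defining $fv\in L^2(\Omega,\mu)\otimes \C^d$ pointwise by $(fv)(x):=f(x)v$, a direct expansion of the integral gives
$$v^*\langle f,g\rangle_d v=\int_\Omega (f(x)v)^*(g(x)v)\,d\mu(x)=\langle fv,gv\rangle_{L^2(\mu)\otimes\C^d}.$$
Moreover, since $-\LL\otimes I_{\cmat}$ acts only on the $L^2$-factor, right multiplication by $v$ intertwines it with $-\LL\otimes I_{\C^d}$, giving $\bigl((-\LL\otimes I_{\cmat})f\bigr)v=(-\LL\otimes I_{\C^d})(fv)$; this is immediate from the definition of the tensor product operator applied to a finite sum $\sum_\alpha f_\alpha\otimes M_\alpha$. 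Put together, the desired matrix inequality reduces, for each $v$, to the scalar-valued statement
$$\alpha\bigl\langle fv,\,(-\LL\otimes I_{\C^d})(fv)\bigr\rangle\ge \langle fv,fv\rangle.$$

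Writing $fv=\sum_{i=1}^d g_i\otimes e_i$ with $g_i\in L^2(\Omega,\mu)$, both sides split diagonally and the inequality becomes $\sum_i\bigl[\alpha\langle g_i,-\LL g_i\rangle-\langle g_i,g_i\rangle\bigr]\ge 0$. The hypothesis $\E_\mu f=0$ passes entrywise to each $g_i$, so every summand is nonnegative by the scalar Poincar\'e inequality applied to $g_i$, and summing closes the argument. No step is genuinely hard; the only piece of bookkeeping that deserves care is the intertwining identity relating $-\LL\otimes I_{\cmat}$ with $-\LL\otimes I_{\C^d}$, which is precisely what allows the scalar inequality to be applied to $d$ ``columns'' of $f$ in parallel.
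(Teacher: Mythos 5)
Your proposal is correct, and the key step is genuinely different from the paper's. The first half of your argument coincides with the paper's: both reduce the Lo\"ewner inequality to scalar inequalities by fixing $v\in\C^d$, define the ``column'' map $f\mapsto fv$, and verify the two identities $v^*\langle f,g\rangle_d\,v = \langle fv,gv\rangle$ and $\bigl((-\LL\otimes I_{\cmat})f\bigr)v = (-\LL\otimes I_{\C^d})(fv)$. Where you diverge is in how you then prove
$$\alpha\bigl\langle fv,\,(-\LL\otimes I_{\C^d})(fv)\bigr\rangle\ge \langle fv,fv\rangle.$$
The paper abstracts this into a proposition about densely defined self-adjoint operators bounded below by $c$, and then invokes the spectral theorem for unbounded self-adjoint operators: the spectrum of $-\LL\otimes I_{\C^d}$ lies in $[1/\alpha,\infty)$, and the inequality follows from the spectral integral. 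You instead expand $fv=\sum_{i=1}^d g_i\otimes e_i$ in coordinates, observe that $\langle e_i,e_j\rangle=\delta_{ij}$ makes both quadratic forms diagonalize as $\sum_i\langle g_i,-\LL g_i\rangle$ and $\sum_i\langle g_i,g_i\rangle$, check that each $g_i\in D(\LL)$ (it is a finite linear combination of elements of $D(\LL)$ since $f$ lies in the algebraic tensor product) with $\E_\mu g_i=0$, and apply the scalar Poincar\'e inequality term by term. This is more elementary: it uses nothing beyond the scalar hypothesis and linearity, avoiding the spectral theorem entirely, and is in fact closer in spirit to the paper's own finite-dimensional warm-up in Section \ref{sec:finite}. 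What the paper's route buys is a cleaner statement of the general Proposition 3.1, which applies to any self-adjoint $A\succeq cI$ rather than being phrased specifically around the Poincar\'e assumption; but for proving Theorem \ref{thm:main} itself your argument is shorter and requires less machinery.
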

The domain $D(\LL)$ is always dense in the Dirichlet domain $D(\EE)$ \cite[Section 3.1.4]{bakry2013analysis}, so Theorem 1.1 implies the more conventional form of the inequality for functions in $D(\EE)\otimes \C^{d\times d}$.
Note that for $\H_d$-valued functions $f$, \eqref{eqn:dpoinc} is precisely:
$$ \alpha \int_\Omega f(x) (-\LL f)(x)d\mu(x) \succeq \int_\Omega f(x)^2d\mu(x),$$
which is identical to the matrix Poincar\'e inequality considered in \cite{aoun2019matrix,tarun}. 

Theorem 1.1 implies that any reversible Markov semigroup satisfying a Poincar\'e inequality satisfies an exponential matrix concentration inequality; in particular \cite[Theorem 1.1]{aoun2019matrix} holds with the ``matrix Poincar\'e'' assumption replaced by ``Poincar\'e''. It also allows us to deduce all of the matrix Poincar\'e inequalities derived in \cite{aoun2019matrix,tarun} from their known scalar counterparts, and yields new matrix Poincar\'e and concentration inequalities, notably for Completely Log Concave (i.e., Lorentzian \cite{branden2019lorentzian}) measures via \cite[Theorem 1.1]{anari2019log}.

The proof of Theorem \ref{thm:main} relies on the spectral theorem for unbounded self-adjoint operators on a complex separable Hilbert space. The only property of Markov generators that is used is self-adjointness on an appropriate domain orthogonal to the constant function. Before presenting this proof in Section \ref{sec:unbdd}, we give an elementary linear algebraic proof of the finite-dimensional case in Section \ref{sec:finite}, which is already enough for several important applications (such as all finite reversible Markov chains and strongly log-concave measures) and avoids any analytic subtleties.

\section{Finite Dimensional Case} \label{sec:finite}

Here we prove Theorem \ref{thm:main} when $\Omega$ is finite with $|\Omega| = n$. Let $H = \{f: \Omega \rightarrow \C: \E_\mu f=0\}$. Let $A: H \rightarrow H$ be the operator $A := - \LL$. Consider an orthonormal eigenbasis $g_1,\ldots, g_{n-1}$ of $A$ (with respect to the inner product $\langle f, g\rangle = \sum_{x \in \Omega} \mu(x) \overline{f(x)} g(x)$). Let $\lambda_i$ be the eigenvalue corresponding to $g_i$. By the assumption of Theorem \ref{thm:main}, $\lambda_i \ge 1/\alpha$ for all $i \in [n-1]$. Now consider any $$f =\sum_{i\le n-1} g_i\otimes M_i\in H \otimes \C^{d \times d}.$$ Then
\begin{align*}
    \EE_d(f,f) &= \big \langle f,(A \otimes I_{\cmat}) f \big \rangle_d \\
    &= \Bigg \langle \sum_{i=1}^{n-1} g_i \otimes M_i ,(A \otimes I_{\cmat}) \left(\sum_{j=1}^{n-1} g_j \otimes M_j \right) \Bigg\rangle_d \\
    &= \sum_{i=1}^{n-1} \sum_{j=1}^{n-1} \lambda_j \langle g_i \otimes M_i, g_j \otimes M_j \rangle_d \\
    &= \sum_{i=1}^{n-1} \lambda_i M_i^* M_i \\
    &\succeq \frac{1}{\alpha} \sum_{i=1}^{n-1} M_i^* M_i \\
    &= \frac{1}{\alpha} \sum_{i=1}^{n-1} \sum_{j=1}^{n-1} \langle g_i \otimes M_i, g_j \otimes M_j\rangle_d = \frac{1}{\alpha} \langle f, f \rangle_d.
\end{align*}
In the above calculations, we used the bilinearity of $\langle \cdot, \cdot\rangle_d$ and the fact that $\langle f \otimes M, g \otimes N \rangle_d = \langle f,g\rangle M^* N$ for $f, g \in H$ and $M, N \in \C^{d \times d}$.

\section{Proof of Theorem \ref{thm:main}} \label{sec:unbdd}
%Let $H$ be a complex separable Hilbert space with orthonormal basis $e_i$.
%For two vectors $f=\sum_i e_i\otimes f(i),g=\sum_i e_i \otimes g(i)\in H\otimes\cmat$ define 
%$$\langle f,g\rangle_d := \sum_i f(i)^*g(i).$$
%This definition is independent of the choice of basis $e_i$ by the complex version of Lemma 2 and an approximation argument.
Theorem \ref{thm:main} follows from the following proposition by taking $A=-\LL$, $H=L^2(\Omega,\mu)\cap\{f:\E_\mu f =0\}$, and $D(A)=D(\LL)\cap\{f\in H:\E_\mu f=0\}$.

\begin{proposition}
Let $A:D(A)\rightarrow H$ be a densely defined self-adjoint operator on a separable complex Hilbert space $H$ 
satisfying $\langle y,Ay \rangle \ge c\|y\|^2$ for all $y\in D(A)$. Then
	$$ \langle f, (A\otimes I_{\cmat})f\rangle_d \succeq c\langle f,f\rangle_d\quad\forall f\in D(A)\otimes \cmat.$$
\end{proposition}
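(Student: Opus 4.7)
My plan is to reduce the matrix-valued inequality to the scalar hypothesis by testing both sides against an arbitrary vector $u\in\C^d$. Since $M\succeq N$ in $\H_d$ is equivalent to $u^*Mu\ge u^*Nu$ for every $u\in\C^d$, it suffices to prove the scalar bound
$$u^* \langle f,(A\otimes I_{\cmat})f\rangle_d\, u \ \ge\ c\cdot u^* \langle f,f\rangle_d\, u$$
for each fixed $u$.

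To carry this out, I would expand $f\in D(A)\otimes\cmat$ in the basis of matrix units as $f = \sum_{a,b=1}^d h_{ab}\otimes E_{ab}$, with $h_{ab}\in D(A)$; this is a finite sum since $\dim\cmat = d^2$, and the operator $A\otimes I_{\cmat}$ acts entry-wise as $(A\otimes I_{\cmat})f = \sum_{a,b}(Ah_{ab})\otimes E_{ab}$. For fixed $u\in\C^d$, define $\phi_k := \sum_a u_a h_{ka}\in D(A)$ for $k=1,\ldots,d$; concretely, $\phi_k(x)$ is the $k$-th entry of the vector $f(x)u\in\C^d$. A direct expansion using $(f(x)^*g(x))_{kl} = \sum_c \overline{f_{ck}(x)}\,g_{cl}(x)$ together with the definition of $\langle\cdot,\cdot\rangle_d$ yields
$$u^* \langle f,(A\otimes I_{\cmat})f\rangle_d\, u \ =\ \sum_{k=1}^d \langle \phi_k, A\phi_k\rangle, \qquad u^* \langle f,f\rangle_d\, u \ =\ \sum_{k=1}^d \|\phi_k\|^2.$$
Applying the scalar hypothesis to each $\phi_k\in D(A)$ and summing over $k$ then finishes the argument.

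The step requiring the most care is the algebraic identity between the matrix-valued inner products and these sums — in particular, checking that after sandwiching by $u$ the matrix structure decouples cleanly into $d$ independent scalar contributions, each attached to a function in $D(A)$. Once that identity is pinned down, the argument uses only the vector-space structure of $D(A)$, the finiteness of the tensor product $D(A)\otimes\cmat$, and the scalar quadratic-form bound; the spectral theorem does not appear to be required for this reduction, though it provides a natural alternative route paralleling the diagonalization carried out in Section \ref{sec:finite}.
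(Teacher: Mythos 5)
Your argument is correct, and it takes a genuinely more elementary route than the paper at the final step. Both proofs begin the same way: sandwich the matrix inequality by a vector $u\in\C^d$ and observe that $u^*\langle f,g\rangle_d\,u$ is the ordinary $L^2$ inner product of the $\C^d$-valued functions $x\mapsto f(x)u$ and $x\mapsto g(x)u$. The paper then keeps this $\C^d$-valued function $f_u$ intact, regards $A\otimes I_{\C^d}$ as a self-adjoint operator on $H\otimes\C^d$, and invokes the spectral theorem for unbounded operators (via the projection-valued measure of $A\otimes I_{\C^d}$) to conclude $\langle f_u,(A\otimes I_{\C^d})f_u\rangle\ge c\|f_u\|^2$. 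You instead decompose $f_u$ coordinate-wise into the $d$ scalar functions $\phi_k=\sum_a u_a h_{ka}\in D(A)$, verify the identities $u^*\langle f,(A\otimes I_{\cmat})f\rangle_d\,u=\sum_k\langle\phi_k,A\phi_k\rangle$ and $u^*\langle f,f\rangle_d\,u=\sum_k\|\phi_k\|^2$, and apply the scalar hypothesis to each $\phi_k$ and sum. This bypasses the spectral theorem entirely: the only facts used are that $D(A)$ is a linear subspace (so each $\phi_k\in D(A)$), that $\cmat$ is finite-dimensional (so all the expansions are finite sums), and the scalar quadratic-form bound itself. Your version is thus closer in spirit to the eigenbasis computation of Section~\ref{sec:finite}, and it also makes clear that the proposition needs nothing from $A$ beyond its quadratic form lower bound on $D(A)$ --- self-adjointness and the associated spectral decomposition are never used. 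The paper's spectral-theoretic finish gives a perhaps more conceptual picture (the spectrum of $A\otimes I_{\C^d}$ lies in $[c,\infty)$), but it imports a heavier tool than the reduction actually requires. One small wording nit: when you say ``the finiteness of the tensor product $D(A)\otimes\cmat$'' you mean that elements of the algebraic tensor product are finite sums because $\cmat$ is finite-dimensional, which is indeed what justifies the expansion $f=\sum_{a,b}h_{ab}\otimes E_{ab}$ and the termwise action of $A\otimes I_{\cmat}$.
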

\begin{proof} For any $v\in \C^d$, define the linear map $(\cdot)_v:H\otimes \cmat\to H\otimes \C^d$ by 
	$$f_v(x) := f(x)v.$$
	Observe that for any $f,g\in H\otimes\cmat$:
	$$v^*\langle f,g\rangle_d v = \int_\Omega v^* f(x)^* g(x) v \: d\mu(x) = \langle f_v,g_v\rangle$$
	where the last inner product on $H\otimes \C^d$ is the standard one (i.e., $\langle f_v,g_v\rangle = \int_\Omega f_v(x)^* g_v(x) d\mu(x)$). Thus, we have for every $f\in D(A)\otimes \cmat$:
	\begin{equation}\label{eqn:fv} v^*\langle f, (A\otimes I_{\cmat})f\rangle_d \: v = \langle f_v, ((A\otimes I_{\cmat}) f)_v\rangle = \langle f_v, (A\otimes I_{\C^d}) f_v\rangle. \end{equation}
	
    Note that $A\otimes I_{\C^d}$ is self-adjoint with domain $D(A)\otimes \C^d\subset H\otimes \C^d$ and $f_v\in D(A)\otimes \C^{d}$.
	Applying the spectral theorem for unbounded operators (e.g., \cite[Theorem VIII.6]{reed1980methods}) and noting that by our assumption the spectrum of $A\otimes I_{\C^d}$ is contained in $[c,\infty)$, we obtain that for some projection valued measure $\{E_\lambda\}_{\lambda\in [c,\infty)}$:
	\begin{equation}\label{eqn:spec}\langle f_v, (A\otimes I_{\C^d}) f_v\rangle = \int_c^\infty \lambda \: d\langle f_v, E_\lambda f_v\rangle \ge c\int_c^\infty d \langle f_v,E_\lambda f_v\rangle = c\|f_v\|^2 = c \: v^* \langle f,f\rangle_d \: v,\end{equation}
	where the integrals are Riemann-Stieltjes integrals.
	Since \eqref{eqn:fv}, \eqref{eqn:spec} hold for every $f\in D(A)\otimes \cmat$ and every $v\in\C^d$, the theorem follows.
\end{proof}
\iffalse
\noindent {\em Remark.} One can wonder which other scalar functional inequalities imply matrix-valued analogues. For instance, an argument similar to the one in Section 2  above shows that if $A\in\C^{n\times n}$ satisfies $\|A\|_{p\rightarrow q}\le c$
then it follows that
$$\left(\sum_{i\le n} |\sum_{j\le n}A_{ij}f(j)|^q\right)^{1/q}\preceq c\cdot \left(\sum_{j\le n} |f(j)|^p\right)^{1/p}$$
for all $f:[n]\rightarrow \cmat$ when $p=q=2$, where $|M|=\sqrt{M^*M}$. This implication is seen to be false for and $p=1,q=\infty$ by considering the $1\times 2$ matrix $A=(1,1)$ and noting that the matrix absolute value does not in general satisfy the triangle inequality.
\fi
\bibliographystyle{amsalpha}
\bibliography{poincare}
\end{document}